\DeclareMathAlphabet{\mathbbold}{U}{bbold}{m}{n}
\definecolor{subsectioncolor}{rgb}{0,0.541,0.855}
\newtheoremstyle{mystyle}
  {}
  {}
  {\itshape}
  {}
  {\bfseries}
  {.}
  { }
  {}
\theoremstyle{mystyle}
\newtheorem{theorem}{Theorem}[section]
\newtheorem{definition}[theorem]{Definition}
\newtheorem{lemma}[theorem]{Lemma}
\newtheorem{proposition}[theorem]{Proposition}
\newtheorem{example}[theorem]{Example}
\newtheorem{assumption}[theorem]{Assumption}
\DeclareMathAlphabet{\mathbbold}{U}{bbold}{m}{n}
\title{Topological Linear System Identification via Moderate Deviations Theory
}
\author{Wouter Jongeneel, Tobias Sutter and Daniel Kuhn
\thanks{The authors are with the Risk Analytics and Optimization Chair, École Polytechnique Fédérale de Lausanne, \{wouter.jongeneel, tobias.sutter, daniel.kuhn\}@epfl.ch. This research was supported by the Swiss National Science Foundation under the NCCR Automation, grant agreement~51NF40\_180545.
}%
}
\begin{document}

\maketitle
\thispagestyle{empty}
\pagestyle{empty}

\begin{abstract}
Two dynamical systems are topologically equivalent when their phase-portraits can be morphed into each other by a homeomorphic coordinate transformation on the state space. The induced equivalence classes capture qualitative properties such as stability or the oscillatory nature of the state trajectories, for example. In this paper we develop a method to learn the topological class of an unknown stable system from a single trajectory of finitely many state observations. {Using a moderate deviations principle for the least squares estimator of the unknown system matrix $\theta$,} we prove that the probability of misclassification decays exponentially with the number of observations at a rate that is proportional to the square of the smallest singular value of $\theta$. 
\end{abstract}
\section{Introduction}
\label{sec:intro}
We consider the discrete-time linear time-invariant system
\begin{equation} \label{eq:LTI:system}
x_{t+1} = \theta x_t + w_t, \quad x_0\sim \nu,
\end{equation}
where $x_t\in\mathbb{R}^n$ and $w_t\in\mathbb{R}^n$ denote the state and the exogenous noise at time $t\in\mathbb{N}$, while~$\theta$ represents the system matrix, and $\nu$ stands for the marginal distribution of the initial state~$x_0$. Except for asymptotic stability we assume that nothing is known about $\theta$, and we aim to identify $\theta$ from a single trajectory of states $\{\widehat{x}_t\}_{t=0}^T$ generated by~\eqref{eq:LTI:system}. A simple estimator for~$\theta$ is the least squares estimator
\begin{equation} \label{eq:LS:MDP}
\widehat \theta_T = \left(\textstyle\sum_{t=1}^T \widehat x_t \widehat x_{t-1}^\mathsf{T}\right)\left(\textstyle\sum_{t=1}^T \widehat x_{t-1}\widehat x_{t-1}^\mathsf{T}\right)^{-1},
\end{equation}
which may take any value in $\mathbb{R}^{n\times n}$.
It is therefore possible that $\widehat{\theta}_T$ is unstable even though $\theta$ is stable, in which case the estimator is of limited practical value. Alternative estimators with attractive statistical properties that are {\em guaranteed} to be stable have been proposed in~\cite{ref:Maciejowski:stab,ref:Boots:Nips,ref:Gillis-19,ref:stablesys:2021arXiv}. However, stability is not the only property of~$\theta$ that impacts the qualitative behavior of a linear system; see Figure~\ref{fig:topospringdamper}. As optimal control laws are known to inherit important structural properties from the system matrix~\cite[Theorem~III.1]{ref:Topo2020}, one should aim to find estimators that are structurally equivalent to~$\theta$. 
That is, if the least squares estimator $\widehat{\theta}_T$ is structurally different from $\theta$ itself, in a sense to be made precise later, then implementing optimal linear feedback designed for $\widehat{\theta}_T$  on $\theta$ results in a \textit{closed-loop} system that is structurally different from the predicted closed-loop system, \textit{e.g.}, you predict a damper but get a spring. 
Capturing the correct qualitative behaviour in the scalar case translates to enforcing stability and to the need of estimating the sign of $\theta$ correctly. Generalizing this \textit{qualitative} notion of \textit{topological equivalence} to higher dimensions will be the main subject of Section~\ref{sec:topo:equiv} below.

\textbf{Related work.}
Linear system identification---especially by means of least squares techniques---has a rich history~\cite{ref:OverscheeDeMoor1996,Verhaegen}. In this paper we are, however, not only interested in finding estimators that fall into the vicinity of the unknown true model~$\theta$. In addition, the estimators should display a qualitatively similar behavior as~$\theta$. This requirement relates to some extent to the work on \textit{qualitative identification} pioneered by
\textcite{ref:kuipers1994qualitative}.
More recently, the focus in linear system identification shifted towards ensuring the efficient use of data.
General \textit{informativity} of data is discussed in~\cite{ref:waarde-2019data}, which justifies the identification pipeline for a class of control problems. Moreover, sharp statistical characterizations of the effectiveness of the least squares estimator~\eqref{eq:LS:MDP} are presented in~\cite{ref:Sim_18,ref:sarkar19a}.
These statistical results usually quantify the likelihood that~$\theta$ lies in some \textit{ball} around $\widehat{\theta}_T$. However, the models residing within this ball may be \textit{qualitatively} different.  
Leveraging recent results from the theory of large and moderate deviations~ \cite{ref:Dupuis-97,hollander2008largedeviationis,dembo2009large}, we will be able to characterize the likelihood that the estimated system is qualitatively equivalent to the unknown true system. 

Regarding topological equivalence in the context of linear control systems, \textcite{ref:Willems_Topo} stated in 1980 that ``\textit{{Because of the obvious ... practical importance of these concepts, ... there is no doubt that they will become standard vocabulary among practitioners.}}'' Although \textcite{ref:PoldermanThesis_87} later provided many additional insights, there has been little recent follow-up work on topological properties of control systems. 

\textbf{Contributions.}
A high-level aim of this work is to showcase how topological insights can benefit the control community. More specifically, we establish topological properties of the \textit{reverse $I$-projection} $\mathcal P(\cdot)$ introduced in~\cite{ref:stablesys:2021arXiv}, which projects any matrix in $\mathbb R^{n\times n}$ onto the non-convex set of stable matrices with respect to an information divergence and can be evaluated highly efficiently.
By exploiting tools from moderate deviations theory, we 
characterize here the probability that the reverse $I$-projection $\mathcal{P}(\widehat{\theta}_T)$ of the least-squares estimator~$\widehat{\theta}_T$ is topologically different from~$\theta$. Formally, we show that
\begin{equation*}
    \mathbb{P}(\mathcal{P}(\widehat{\theta}_T) \not\overset{t}{\sim} \theta) \lesssim e^{-\mathcal{O}(\sigma_{\mathrm{min}}(\theta)^2\sqrt{T})},   
\end{equation*}
where `$\overset{t}{\sim}$' denotes topological equivalence. Thus, the probability that $\mathcal{P}(\widehat{\theta}_T)$ misrepresents the topological properties of~$\theta$ decays exponentially with~$T$ at a rate $\propto \sigma_{\mathrm{min}}(\theta)^2$.

 \begin{figure}[t]
     \centering
     \includegraphics[angle=270,scale=0.8]{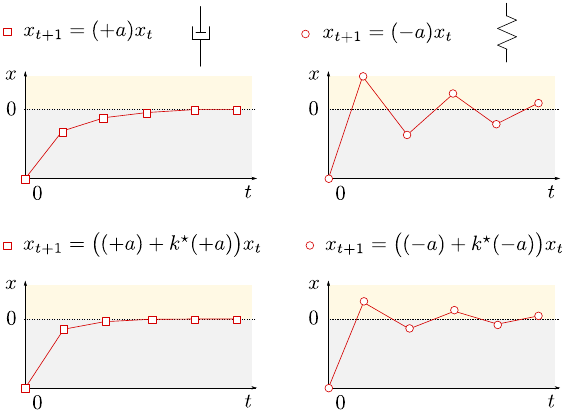}
     \caption{Given any $a\in (0,1)$, the two uncontrolled systems at the top display structurally different behavior, \textit{i.e.}, they respectively represent a damper versus a spring. Applying an optimal LQR controller with feedback gain $k^{\star}$ preserves the structures of the corresponding uncontrolled systems~\cite{ref:Topo2020}.}
     \label{fig:topospringdamper}
 \end{figure}

\textbf{Notation.}
The spectral radius of a matrix $A\in\mathbb{R}^{n\times n}$ is denoted by~$\rho(A)$ and the set of asymptotically stable matrices is denoted by $\Theta=\{\theta\in \mathbb{R}^{n\times n}:\rho(\theta)<1\}$. For a set $\mathcal{D}\subset \mathbb{R}^n$, we use $\mathsf{cl} \, \mathcal{D}$ and $\mathsf{int}\, \mathcal{D}$ to denote the closure and the interior of~$\mathcal{D}$, respectively. For a real sequence $\{a_T\}_{T\in\mathbb{N}}$ the relation $1\ll a_T\ll T$ means that $a_T /T \to 0$ and $a_T\to\infty$ as $T\to\infty$. We denote the real $n$-dimensional general linear group by $\mathsf{GL}(n,\mathbb{R})=\{A\in \mathbb{R}^{n\times n}:\mathrm{det}(A)\neq 0\}$. The sets $\mathsf{GL}^+(n,\mathbb{R})$ and $\mathsf{GL}^-(n,\mathbb{R})$ contain all matrices in $\mathsf{GL}(n,\mathbb{R})$ with a strictly positive or negative determinant.  

\section{Preliminaries}
\label{sec:prelim}
We first discuss the notion of topological equivalence of linear systems in \S~\ref{sec:topo:equiv} and subsequently review the concept of a moderate deviation principle in \S~\ref{sec:moderate:deviations:theory}. 

 \subsection{Topological equivalence of linear dynamical systems}
 \label{sec:topo:equiv}
 This section is mainly inspired by the work of Kuiper and Robbin \cite{ref:Robbin1972,ref:Kuiper_73}. In the following we denote by $f(x)=\theta x$ the state-dependent part of the dynamics~\eqref{eq:LTI:system}, and we refer to the function~$f$ as the \textit{time-one map}. While all time-one maps considered in this paper are linear, our results naturally extend to nonlinear systems. 
 
 \begin{definition}[Topological equivalence]
\label{def:topo_equiv}
Two linear time-one maps $f:\mathcal{X}\to \mathcal{X}$ and $g:\mathcal{Y}\to \mathcal{Y}$ over topological vector spaces $\mathcal{X}$ and $\mathcal{Y}$ are called topologically equivalent (or conjugate), denoted as $f\overset{t}{\sim} g$, if there exists a homeomorphism $\varphi: \mathcal{X}\to \mathcal{Y}$ such that $g\circ \varphi =\varphi \circ f$.
\end{definition}

Recall that a homeomorphism is a continuous bijection with a continjous inverse. We say that two dynamical systems are \textit{topologically equivalent} if their time-one maps are topologically equivalent in the sense of Definition~\ref{def:topo_equiv}. If the two dynamical systems are noise-free ({\em i.e.}, the time-one maps alone determine the dynamics), then they are topologically equivalent if there is a homeomorphism mapping the trajectories of one system onto those of the other~\cite{ref:Robinson}, \cite[Chapter~2]{ref:Kuznetsov}. 
 This means that the state trajectories of topologically equivalent dynamical systems are \textit{qualitatively} identical. To see this, consider two time-one maps $f:\mathcal{X}\to \mathcal{X}$ and $g:\mathcal{Y}\to \mathcal{Y}$ initialized at $x_0$ and $y_0$, respectively, and assume that $f$ and $g$ are topologically equivalent. In this case 
\begin{equation*}
\begin{tikzcd}
x_0 \arrow{r}{f^n} \arrow[swap]{d}{\varphi} & f^n(x_0) \arrow{d}{\varphi} \\
y_0 \arrow{r}{g^n} & g^n(y_0)
\end{tikzcd}
\end{equation*}
commutes for any $n\in \mathbb{N}$ because $f=\varphi\circ g \circ \varphi^{-1}$~\cite{ref:Kuznetsov}. Note that this argument holds indeed for any pair $(x_0,y_0)$ of initial states, that is, $\varphi$ essentially constitutes a homeomorphic coordinate change. A change  of a dynamical system that destroys topological equivalence is called a \textit{bifurcation}.

 
If two linear time-one maps of the form $f(x)=Fx$ and $g(y)=Gy$ parametrized by the matrices $F$ and $G$ are topologically equivalent, then, by slight abuse of notation, we write $F\overset{t}{\sim} G$. We emphasize that topological equivalence generalizes the more common \emph{linear equivalence}, where $\varphi(x)=Zx$ is a linear isomorphism parametrized by a matrix $Z$. In this case the time-one maps $f(x)=Fx$ and $g(y)=Gy$ with $G=ZAZ^{-1}$ are linearly equivalent. 
To gain some intuition for Definition~\ref{def:topo_equiv}, consider a scalar system of the form $f(x)=a x$ for some~$a\in\mathbb R$. Figure~\ref{fig:scalar:interval} shows that this system admits seven equivalence classes with respect to the topological equivalence relation at hand, which correspond to seven intervals in~$\mathbb{R}$. This example also shows that one should {\em not} expect both $\varphi$ and $\varphi^{-1}$ to be differentiable. Indeed, assuming $\varphi$ to be a diffeomorphism implies that $\varphi$ must be linear~\cite[Proposition~6.1]{ref:Robinson}. Note also that any homeomorphism on $\mathbb{R}$ is necessarily monotone, which confirms our earlier insight that the damper can never be mapped to the spring in Figure~\ref{fig:topospringdamper}. 
\begin{figure}
    \centering
    \includegraphics[angle=270,scale=0.75]{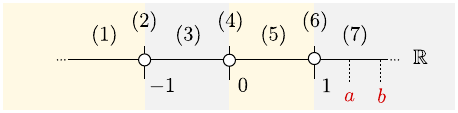}
    \caption{Visualization of the seven topological equivalence classes of a scalar linear system ~\cite[Proposition~1.5]{ref:Kuiper_73}. Note that if the time-one maps $f(x)=ax$ and $g(y)=by$ belong to the same non-degenerate class ({\em i.e.}, $a$ and $b$ both belong to interval $(1)$, $(3)$, $(5)$ or $(7)$), then $g=\varphi\circ f \circ \varphi^{-1}$ for $\varphi(x) = x|x|^{c-1}$ with $c=\log(|b|)/\log(|a|)$. For example, $f(x)=2x$ and $g(y)=8y$ belong to class $(7)$ and are related by the smooth map $\varphi(x)=x^3$ with inverse $\varphi^{-1}=x^{1/3}$ \cite{ref:Kuiper_73}.}
    \label{fig:scalar:interval}
\end{figure}

 Asymptotic stability or topological invariants such as orientation provide conditions for the topological equivalence of two time-one maps~\cite{ref:Kuiper_73}.

\begin{definition}[Orientation]
We call a linear invertible time-one map $f(x)=Fx$ orientation-preserving if $\mathrm{det}(F)>0$, and we denote by $\mathrm{or}(f)$ the sign of $\mathrm{det}(F)$. 
\end{definition}

Note that $\mathsf{GL}^+(n,\mathbb{R})$ and $\mathsf{GL}^-(n,\mathbb{R})$ represent the components of $\mathsf{GL}(n,\mathbb{R})$ that contain all invertible matrices~$F$ whose associated time-one maps $f(x)=Fx$ satisfy $\mathrm{or}(f)=1$ and $\mathrm{or}(f)=-1$, respectively. Note also that the sign of the (signed) volume of the unit hypercube is invariant under an orientation-preserving map. For additional information on the concept of orientation see for example~\cite[Chapter~6]{MarsdenTensors}.
The orientation of a map is also known to be a \emph{topological invariant}, that is, if two invertible maps~$f$ and~$g$ are topologically equivalent, then $\mathrm{or}(f)=\mathrm{or}(g)$ \cite[Chapters~6 \& 10]{Lee1}. For brevity, we do not generalize the orientation operator to linear maps that fail to be invertible. The following lemma provides a convenient tool for checking topological equivalence.

\begin{lemma}[Topological equivalence of asymptotically stable systems~{\cite[Theorem~9.2]{ref:Robinson}} ]
\label{lem:main:topo:equiv}
Assume that $f(x)=Fx$ and $g(y)=Gy$ are asymptotically stable linear isomorphisms on $\mathbb{R}^n$. Then, we have~$f\overset{t}{\sim} g$ if and only if there exists a continuous path $X(t)$ in $\mathsf{GL}(n,\mathbb{R})$ parametrized in~$t\in [0,1]$ with $X(0)=F$ and $X(1)=G$.
\end{lemma}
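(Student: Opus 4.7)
The plan is to prove both directions by using the two connected components $\mathsf{GL}^+(n,\mathbb{R})$ and $\mathsf{GL}^-(n,\mathbb{R})$ as the bridge between the dynamical notion (topological equivalence) and the geometric notion (path-connectedness in $\mathsf{GL}(n,\mathbb{R})$). I would first record, or cite, the classical fact that $\mathsf{GL}(n,\mathbb{R})$ has exactly these two path-components, each determined by the sign of the determinant. This can be seen via polar decomposition $A = QP$ with $P$ positive definite (which lies on a convex path to $I$) and $Q$ orthogonal, together with path-connectedness of $\mathsf{SO}(n)$.

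For the necessity direction, assume $f \overset{t}{\sim} g$. Since orientation is a topological invariant (as recalled just before the lemma), we obtain $\mathrm{or}(f)=\mathrm{or}(g)$, so $\det(F)$ and $\det(G)$ share a sign and $F,G$ lie in the same component of $\mathsf{GL}(n,\mathbb{R})$. Path-connectedness of that component then yields the continuous path $X(t)$.

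For the sufficiency direction, the hypothesis gives $\mathrm{or}(f)=\mathrm{or}(g)$, so it remains to prove that among asymptotically stable linear isomorphisms on $\mathbb{R}^n$, orientation is a \emph{complete} topological invariant. The approach I would take is to fix a canonical contraction in each orientation class, for instance $F_+ = \tfrac{1}{2} I_n$ and $F_- = \tfrac{1}{2}\operatorname{diag}(-1,1,\dots,1)$, and show that every asymptotically stable $F$ is topologically conjugate to $F_{\mathrm{or}(F)}$; the result then follows by transitivity of $\overset{t}{\sim}$. Existence of this conjugacy is a fundamental-domain construction: fix a norm $\|\cdot\|_F$ adapted to $F$ (so that $\|Fx\|_F < \|x\|_F$ for $x\neq 0$); the closed annulus $A_F = \{x : \tfrac12 \le \|x\|_F \le 1\}$ is swept out once by the orbits of $F$ away from the origin, i.e.\ $\mathbb{R}^n\setminus\{0\} = \bigsqcup_{k\in\mathbb{Z}} F^k(\operatorname{int} A_F)$ up to boundary identifications. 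Building an orientation-preserving homeomorphism $\varphi_0 : A_F \to A_{F_{\mathrm{or}(F)}}$ that matches $F$ on the outer boundary to $F_{\mathrm{or}(F)}$ on the corresponding boundary, and extending by equivariance via $\varphi(F^k x) = F_{\mathrm{or}(F)}^k \varphi_0(x)$, produces a conjugacy on $\mathbb{R}^n\setminus\{0\}$; setting $\varphi(0)=0$ and invoking stability ($\|F^k x\|\to 0$ and $\|F_{\mathrm{or}(F)}^k y\|\to 0$) gives continuity at the origin.

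The main obstacle is the last construction: producing the boundary-matching annular homeomorphism, especially in the orientation-reversing case, and verifying continuity of $\varphi$ and $\varphi^{-1}$ at $0$. The former relies on the fact that orientation-preserving self-homeomorphisms of $S^{n-1}$ form a path-connected group, allowing one to interpolate between the two boundary identifications; the latter is automatic once one works in the adapted norms $\|\cdot\|_F$ and $\|\cdot\|_{F_{\mathrm{or}(F)}}$, which both shrink geometrically under iteration. Since the detailed construction is standard in dynamical systems (Robbin \cite{ref:Robbin1972}), I would cite Theorem~9.2 of \cite{ref:Robinson} for this step rather than rederive it, and focus the write-up on the orientation/component correspondence.
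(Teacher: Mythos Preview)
The paper does not give its own proof of this lemma at all; it is stated as a citation of \cite[Theorem~9.2]{ref:Robinson} and used as a black box, with the paper only remarking afterward that, since $\mathsf{GL}(n,\mathbb{R})$ decomposes into the two connected sets $\mathsf{GL}^+(n,\mathbb{R})$ and $\mathsf{GL}^-(n,\mathbb{R})$, the lemma reduces topological equivalence of stable isomorphisms to matching the sign of the determinant. Your proposal therefore goes well beyond what the paper does: you supply a correct outline of the standard argument (orientation as a topological invariant for necessity, and the fundamental-domain/adapted-norm construction reducing any stable isomorphism to a canonical model $\tfrac{1}{2}I_n$ or $\tfrac{1}{2}\operatorname{diag}(-1,1,\dots,1)$ for sufficiency), and you even flag that the delicate annulus-interpolation step is exactly what one would defer to \cite{ref:Robinson} or \cite{ref:Robbin1972}. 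Since the paper itself simply cites the result, your sketch is not only consistent with but strictly more informative than the paper's treatment; there is no gap to point to.
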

As $\mathsf{GL}(n,\mathbb{R})$ can be decomposed into $\mathsf{GL}^+(n,\mathbb{R})$ and $\mathsf{GL}^-(n,\mathbb{R})$, both of which are connected, Lemma~\ref{lem:main:topo:equiv} implies that these $f$ and $g$ are topologically equivalent if and only if the signs of the determinants of $F$ and $G$ match. By slight abuse of notation, we henceforth define the orientation $\mathrm{or}(F)$ of an invertible matrix~$F$ as the sign of $\det(F)$.


\subsection{Moderate deviations theory}
\label{sec:moderate:deviations:theory}
Throughout the paper we assume that all random objects are defined on a measurable space~$(\Omega, \mathcal{F})$ equipped with a probability measure~$\mathbb{P}_\theta$ parametrized by the (asymptotically stable) system matrix~$\theta\in\Theta$ from~\eqref{eq:LTI:system}. We denote the expectation operator with respect to $\mathbb{P}_\theta$ by $\mathbb{E}_{\theta}[\cdot]$. From now on we impose the following assumption borrowed from~\cite{ref:stablesys:2021arXiv}.


\begin{assumption}[Linear system]\label{ass:linear:sys} The following hold.
\begin{enumerate}[(i)]
    \itemsep0em 
	\item \label{ass:lin:sys:stability} The system~\eqref{eq:LTI:system} is asymptotically stable, \textit{i.e.,} $\theta\in\Theta$.
	\item \label{ass:lin:sys:exogeneous:noise} For each $\theta\in\Theta$ the disturbances $\{w_t\}_{t\in\mathbb{N}}$ are independent and identically distributed (i.i.d.) and independent of $x_0$ under $\mathbb{P}_\theta$. The disturbances are unbiased ($\mathbb{E}_{\theta}[w_t]=0$) and non-degenerate ($S_w=\mathbb{E}_{\theta}[w_t w_t^\mathsf{T}]\succ 0$), and their probability density is everywhere positive.
\end{enumerate}
\end{assumption}

Assumption~\ref{ass:linear:sys} implies that the linear system~\eqref{eq:LTI:system} admits an invariant distribution~$\nu_\theta$~\autocite[\S~10.5.4]{ref:meyn-09} with zero mean and covariance matrix~$S_\theta$, which is given by the unique positive definite solution of the discrete Lyapunov equation 
\begin{equation}\label{eq:Lyapunov}
S_\theta = \theta S_\theta \theta^\mathsf{T} + S_w,
\end{equation}
see, {\em e.g.}, \autocite[\S~6.10\,E]{ref:Antsaklis-06}. Next, we describe a method to characterize the probability of the least squares estimator~$\widehat{\theta}_T$ deviating from $\theta$ by a prescribed threshold. To this end, we denote by~$\Theta'=\mathbb R^{n\times n}$ the space of all estimator realizations that are possible in view of Assumption~\ref{ass:linear:sys}, and we use the discrepancy function $I:\Theta'\times \Theta\rightarrow [0,\infty]$ with
\begin{equation} \label{eq:rate:function:AR:MDP}
	I(\theta^\prime,  \theta) = \tfrac{1}{2} \mathrm{tr}\left(S_w^{-1}(\theta^\prime - \theta) S_\theta (\theta^\prime - \theta)^\mathsf{T}\right)
\end{equation}
to quantify the difference between an estimator realization~$\theta'\in\Theta'$ and the system matrix~$\theta\in\Theta$; see~\cite{ref:stablesys:2021arXiv}.
Here, the invariant state covariance matrix~$S_\theta
$ is defined as in~\eqref{eq:Lyapunov}. Note that~$S_\theta
$ and thus also~$I(\theta^\prime,  \theta)$ diverge as~$\theta$ approaches the boundary of~$\Theta$ and becomes unstable. Note also that since $S_w\succ 0$ and hence $S_{\theta}\succ 0$, $I(\theta^\prime,  \theta)$ vanishes if and only if $\theta'=\theta$. In this sense $I$ behaves like a distance. 
Note, however, that $I(\theta^\prime,  \theta)$ is not symmetric in~$\theta$ and~$\theta'$.

Next we recall the notions of a {\em rate function} and a {\em moderate deviation principle} (MDP) such that we can review the key results of~\cite{ref:stablesys:2021arXiv}. For a comprehensive introduction to moderate deviations theory we refer to~\autocite{hollander2008largedeviationis, dembo2009large}. 
\begin{definition}[Rate function]
	\label{def:rate_function:original}
	We call $I:\Theta'\times \Theta\rightarrow [0,\infty]$ a rate function if it is lower semi-continuous in~$\theta'$.
\end{definition}

\begin{definition}[Moderate deviation principle]
	\label{def:MDP:general}
	A sequence of estimators $\{\widehat\vartheta_T\}_{T\in\mathbb{N}}$ is said to satisfy a moderate deviation principle with rate function $I$ if for every sequence $\{a_T\}_{T\in\mathbb{N}}$ of real numbers with $1\ll a_T\ll T$, for every Borel set $\mathcal{D}\subset \Theta'$ and for every $\theta\in\Theta$ we have
	\begin{subequations}
		\label{eq:ldp_exponential_rates}
		\begin{align}
		\label{eq:ldp_exponential_rates_lb}
		-\inf_{\theta' \in \mathsf{int}\,{\mathcal{D}}} \, I(\theta', \theta) \leq& \liminf_{T\to \infty}~\frac{1}{a_T} \log \mathbb{P}_{\theta}\left( \widehat \vartheta_T \in \mathcal{D} \right) \\
		\leq& \limsup_{T\to \infty}~\frac{1}{a_T} \log \mathbb{P}_\theta\left( \widehat \vartheta_T \in \mathcal{D} \right)\\ 
		 \leq &-\inf_{\theta' \in \mathsf{cl}\, {\mathcal{D}}} \,  I(\theta', \theta).
		\label{eq:ldp_exponential_rates_ub}
		\end{align}
	\end{subequations}
\end{definition}
If $I(\theta',\theta)$ is continuous (instead of merely lower semi-continuous) in $\theta'$ and if~$\mathsf{int}\,\mathcal{D}$ is dense in $\mathcal{D}$, then the infima in~\eqref{eq:ldp_exponential_rates_lb} and~\eqref{eq:ldp_exponential_rates_ub} match, and all inequalities in~\eqref{eq:ldp_exponential_rates} reduce to equalities. Then, \eqref{eq:ldp_exponential_rates} implies that~$\mathbb{P}_\theta(\widehat \vartheta_T\in\mathcal{D})= e^{-r a_T+o(a_T)}$, where~$r=\inf_{\theta' \in \mathcal{D}} I(\theta', \theta)$ quantifies the $I$-distance of the system matrix~$\theta$ from the set~$\mathcal{D}$, see Figure~\ref{fig:Idist}. 
 Thus, the distance $r$ coincides with the decay rate of the probability that~$\widehat \vartheta_T$ materializes in~$\mathcal{D}$, while~$\{a_T\}_{T\in\mathbb{N}}$ can be viewed as the speed of convergence. The condition~$1\ll a_T\ll T$ is satisfied, for example, if~$a_T=\sqrt{T}$, $T\in\mathbb{N}$.

\begin{figure}
    \centering
    \includegraphics[scale=0.75,angle=270]{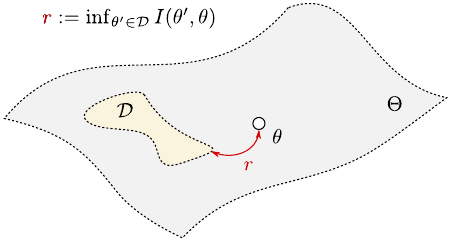}
    \caption{The $I$-distance between $\theta$ and some set $\mathcal{D}\not\ni \theta$ quantifies the rate at which the likelihood of an estimator $\widehat{\theta}_T$ falling into $\mathcal{D}$ decays, \textit{i.e.,} $\mathbb{P}_\theta(\widehat \theta_T\in\mathcal{D})\approx e^{-r a_T}$. }
    \label{fig:Idist}
\end{figure}

It is unknown whether the least squares estimators $\widehat \theta_T$ defined in~\eqref{eq:LS:MDP} satisfy an MDP for $n>1$. However, the \textit{transformed} least squares estimators defined~as
\begin{equation} \label{eq:transformed:LSE}
    \widehat \vartheta_T(\widehat \theta_T,\theta)
    =\sqrt{T/a_T}(\widehat{\theta}_T-\theta)+\theta
\end{equation}
are known to satisfy an MDP for any~$n\in\mathbb N$, where the underlying rate function given by the discrepancy function~\eqref{eq:rate:function:AR:MDP}. Whenever there is no risk of confusion, we drop the explicit dependence of~$\widehat \vartheta_T$ on $\widehat{\theta}_T$ and $\theta$. In order to formally introduce the advertised MDP, we impose another standard regularity condition borrowed from~\cite{ref:stablesys:2021arXiv}, which is again assumed to hold throughout the remainder of the paper. 

\begin{assumption}[Light-tailed noise and stationarity]\label{ass:model:MDP:regularity} The following hold for every $\theta\in\Theta$.
\begin{enumerate}[(i)]
	\item \label{ass:control:regularity:i} The disturbances $\{w_t\}_{t\in\mathbb{N}}$ are
	light-tailed, \textit{i.e.}, there exists $\alpha >0$ with $\mathbb{E}_{\theta}[e^{\alpha \|w_t\|^2}]< \infty$ for all $t\in\mathbb{N}$.
	\item \label{ass:control:regularity:ii} The initial distribution $\nu$ coincides with the invariant distribution $\nu_\theta$ of the linear system \eqref{eq:LTI:system}.
\end{enumerate}
\end{assumption}

We can now formally state the MDP for the estimators~\eqref{eq:transformed:LSE}.

\begin{proposition}[Moderate deviation principle~{\cite[Proposition 3.4]{ref:stablesys:2021arXiv}}]
\label{prop:MDP:closed:loop}
	If $\{a_T\}_{T\in\mathbb{N}}$ is a real sequence with $1\ll a_T\ll T$, then the transformed least squares estimators
	$\{\widehat \vartheta_T\}_{T\in\mathbb{N}}$ defined in~\eqref{eq:transformed:LSE} satisfy an MDP with rate function~\eqref{eq:rate:function:AR:MDP}.
\end{proposition}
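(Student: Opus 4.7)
The plan is to establish the MDP in three conceptually independent steps: an explicit decomposition of the estimator error, an MDP for a matrix-valued martingale, and an exponential concentration of the empirical state Gram matrix, which are then assembled via a contraction principle together with an exponential-equivalence argument.

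First, substituting the dynamics~\eqref{eq:LTI:system} into~\eqref{eq:LS:MDP} yields the identity $\widehat{\theta}_T - \theta = T^{-1} M_T (V_T/T)^{-1}$, where $M_T = \sum_{t=1}^T w_{t-1} \widehat{x}_{t-1}^\mathsf{T}$ is a matrix-valued martingale with respect to the natural filtration of the process and $V_T = \sum_{t=1}^T \widehat{x}_{t-1}\widehat{x}_{t-1}^\mathsf{T}$ is the empirical state Gram matrix. Consequently $\widehat{\vartheta}_T - \theta = (T a_T)^{-1/2} M_T (V_T/T)^{-1}$, which cleanly isolates the genuinely stochastic martingale factor from a factor that will concentrate on a deterministic limit.

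Second, I would invoke a Puhalski-type moderate-deviations criterion for martingales, adapted to matrix-valued martingales with light-tailed increments, to show that $N_T := (Ta_T)^{-1/2} M_T$ satisfies an MDP at speed $a_T$ with the Gaussian rate function $J(M) = \tfrac{1}{2}\mathrm{tr}(S_w^{-1} M S_\theta^{-1} M^\mathsf{T})$. Two ingredients are required: (i) convergence of the (vectorized) normalized predictable quadratic variation, which in this linear setting equals $S_w \otimes (V_{T-1}/T)$ and tends to $S_w \otimes S_\theta$ by the ergodic theorem under Assumption~\ref{ass:model:MDP:regularity}\eqref{ass:control:regularity:ii}; and (ii) a Lindeberg-type exponential-negligibility bound on the increments $w_{t-1}\widehat{x}_{t-1}^\mathsf{T}$ at scale $\sqrt{T/a_T}$, which reduces to an exponential moment estimate for $\|w_t\|\cdot\|\widehat{x}_t\|$ and is supplied by the light-tail condition in Assumption~\ref{ass:model:MDP:regularity}\eqref{ass:control:regularity:i} combined with the stationarity assumed in~\eqref{ass:control:regularity:ii}. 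In parallel, the Gram matrix itself satisfies an $a_T$-exponential concentration bound: for every $\varepsilon>0$ there exists $c(\varepsilon)>0$ with $\mathbb{P}_\theta(\|V_T/T - S_\theta\| > \varepsilon) \leq e^{-c(\varepsilon)a_T}$ for all sufficiently large $T$, which follows from standard moderate-deviation estimates for empirical covariances of geometrically ergodic Markov chains under stationary initialization and light tails~\autocite{dembo2009large}. Hence, on the scale $a_T$, the random factor $(V_T/T)^{-1}$ may be replaced by the deterministic $S_\theta^{-1}$ without altering the exponential rate, so that $\widehat{\vartheta}_T - \theta$ and $N_T S_\theta^{-1}$ are exponentially equivalent at speed $a_T$.

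Finally, applying the contraction principle to the continuous linear map $M \mapsto M S_\theta^{-1} + \theta$ transfers the MDP from $N_T$ to $\widehat{\vartheta}_T$, and a short algebraic simplification shows that the contracted rate function evaluated at $\theta'$ equals $J((\theta'-\theta) S_\theta) = \tfrac{1}{2}\mathrm{tr}(S_w^{-1}(\theta'-\theta) S_\theta (\theta'-\theta)^\mathsf{T}) = I(\theta',\theta)$, as claimed. The main technical obstacle lies in the martingale step: verifying Puhalski's criterion for a \emph{matrix}-valued martingale whose increments are products of the noise with the time-correlated, merely light-tailed state vector; here the joint use of the invariant-distribution initialization and the sub-Gaussian-type moment bound on $\{w_t\}$ is essential in order to produce the exponential-negligibility estimate uniformly in $T$ and to control the cross terms that appear when vectorizing the quadratic variation.
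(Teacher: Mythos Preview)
The paper does not prove Proposition~\ref{prop:MDP:closed:loop}; it is quoted from~\cite[Proposition~3.4]{ref:stablesys:2021arXiv} and used as a black box, so there is no in-paper argument to compare against. Your outline---the error decomposition $\widehat\vartheta_T-\theta=(Ta_T)^{-1/2}M_T(V_T/T)^{-1}$, an MDP for the martingale part $(Ta_T)^{-1/2}M_T$ via a Puhalskii/Dembo-type criterion, replacement of $(V_T/T)^{-1}$ by $S_\theta^{-1}$ through exponential equivalence, and a contraction by the affine map $M\mapsto MS_\theta^{-1}+\theta$---is the standard route for MDPs of least-squares estimators in stable autoregressions and is, to the best of my knowledge, essentially what the cited reference does. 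The algebra reducing the contracted rate to $I(\theta',\theta)$ is correct.

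One step needs sharpening. For the exponential-equivalence argument you state that $\mathbb{P}_\theta(\|V_T/T-S_\theta\|>\varepsilon)\le e^{-c(\varepsilon)a_T}$ for some $c(\varepsilon)>0$. This is \emph{not} sufficient: exponential equivalence at speed $a_T$ requires
\[
\limsup_{T\to\infty}\frac{1}{a_T}\log\mathbb{P}_\theta\big(\|V_T/T-S_\theta\|>\varepsilon\big)=-\infty,
\]
i.e.\ a \emph{super}-exponential bound at the MDP speed. Otherwise, when you split on $\{\|N_T\|\le R\}$ and let $R\to\infty$, the Gram-matrix term only contributes a finite rate $c(\varepsilon_R)$ which typically tends to $0$ as $\varepsilon_R\to 0$. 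What actually holds under Assumptions~\ref{ass:linear:sys} and~\ref{ass:model:MDP:regularity} is a large-deviation estimate at speed $T$ for the empirical second moment of the stationary, geometrically ergodic, light-tailed chain $\{\widehat x_t\}$, giving $\mathbb{P}_\theta(\|V_T/T-S_\theta\|>\varepsilon)\le e^{-c(\varepsilon)T}$; since $a_T\ll T$, this yields the required super-exponential decay at speed $a_T$. With this correction the argument goes through.
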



Proposition~\ref{prop:MDP:closed:loop} is attractive due to the generality of MDPs, which provide tight bounds on the (asymptotic) probability of \textit{any} Borel set of estimator realizations. A simple direct application of Proposition~\ref{prop:MDP:closed:loop} is described below.

\begin{example}[Scalar system identification and noise invariance]
\label{ex:sysid:noise:mag}
\upshape{
Consider a scalar system with $S_w=\sigma_w^2>0$. As shown in~\cite[Example 3.5]{ref:stablesys:2021arXiv}, Proposition~\ref{prop:MDP:closed:loop} implies that
\begin{align*}
    \mathbb{P}_{\theta}(|\widehat{\theta}_T-\theta|&\!> \varepsilon \sqrt{a_T/T})\\
    &= \textstyle \exp\left( -\frac{1}{2}\varepsilon^2\,a_T/(1-\theta^2) +o(a_T)\right)
\end{align*}
for any $\varepsilon>0$ and $T\in\mathbb{N}$. 
Thus, the decay rate on right hand side of the above expression is independent of the noise intensity $\sigma_w^2$. If $n>1$, define $Z(\theta',\theta)=(\theta'-\theta) \otimes ( \theta'-\theta)(I_{n^2}-\theta \otimes \theta)^{-1}$, where $\otimes$ denotes the Kronecker product. 
By~\cite[p.~265]{ref:Hamilton-94}, the rate function~\eqref{eq:rate:function:AR:MDP} can then be recast as
\begin{align*}
	I(\theta',\theta)&=\tfrac{1}{2}\mathrm{vec}(S_w^{-1})^{\mathsf{T}}Z(\theta',\theta)\mathrm{vec}(S_w),
	\end{align*}
where $\mathrm{vec}(S_w)\in \mathbb{R}^{n^2}$ represents the vector obtained by stacking the columns of $S_w$ on top of each other. This representation reveals that the rate function is indeed invariant under scaling of the noise covariance matrix, that is, it is independent of the overall noise level for all $n\in\mathbb N$.
} \hfill\qed
\end{example}

We finally highlight that the rate function~\eqref{eq:rate:function:AR:MDP} can be used to construct a reverse $I$-projection defined through
\begin{equation}
\label{equ:rev:I:proj}
    \mathcal{P}(\theta') \in \arg\min_{\theta\in\Theta} I(\theta',{\theta}),
\end{equation}
which maps any point~$\theta'\in\Theta'$ to the nearest point in the non-convex set~$\Theta$ of asymptotically stable matrices with respect to the $I$-distance; see~\cite{ref:stablesys:2021arXiv}. The reverse $I$-projection $\mathcal{P}(\widehat{\theta}_T)$ of the least squares estimator $\widehat{\theta}_T$ provides an estimator for~$\theta$ that is stable by construction, has desirable statistical properties and can be efficiently computed. Indeed, one can show that
\begin{equation}
\label{equ:rev;I:proj:comp}
    \mathcal{P}(\theta') = \theta' + \mathsf{dlqr}(\theta',I_n,I_n,(2\delta S_w)^{-1}) + \mathcal{O}(\delta^p)
\end{equation}
for some $p\geq 1$,
where $\mathsf{dlqr}(\cdot)$ denotes the standard discrete-time LQR routine,\footnote{\url{https://juliacontrol.github.io/ControlSystems.jl/latest/examples/example/\#LQR-design}}
see~\cite[\S 3.3]{ref:stablesys:2021arXiv}. In addition, the reverse $I$-projection preserves orientation, \textit{i.e.}, $\mathrm{or}(\mathcal{P}(\theta')) = \mathrm{or}(\theta')$ for any $\theta'\in \mathsf{GL}(n,\mathbb{R})$, see, \textit{e.g.}, \cite[Corollary 3.12]{ref:stablesys:2021arXiv} and~\cite{ref:Topo2020}. In fact, the numerical approximation of $\mathcal{P}(\theta')$ on the right hand side of~\eqref{equ:rev;I:proj:comp} without the error term~$\mathcal O(\delta^p)$ is also asymptotically stable and preserves orientation for any~$\delta>0$. Due to its desirable statistical and computational properties, our proposed approach to estimate the topological class of~$\theta\in\Theta$ will critically rely on the reverse $I$-projection~$\mathcal{P}(\widehat{\theta}_T)$. 


\section{Main results}
\label{sec:main}
To aid the presentation we assume from now on without much loss of generality that $\theta$ is invertible. We are now ready to demonstrate that the MDP of  Proposition~\ref{prop:MDP:closed:loop} allows us via the reverse $I$-projection~$\mathcal{P}(\widehat{\theta}_T)$ to derive sharp bounds on the decay rate of the probability of the event $\mathcal{P}(\widehat{\theta}_T)\not\overset{t}{\sim}\theta$.  
Recall from Lemma~\ref{lem:main:topo:equiv} that the two stable and ($\mathbb P_\theta$-almost surely) invertible matrices $\mathcal{P}(\widehat{\theta}_T)$ and $\theta$ are topologically equivalent if and only if they have the same orientation. Recall also that $\mathrm{or}(\mathcal{P}(\widehat{\theta}_T)) = \mathrm{or}(\widehat{\theta}_T)$ because the reverse $I$-projection preserves orientation. Checking whether $\mathcal{P}(\widehat{\theta}_T)$ is topologically equivalent to $\theta$ is thus tantamount to checking whether the determinants of $\widehat{\theta}_T$ and $\theta$ have the same signs. 

\begin{theorem}[Probability of misclassification]
\label{thm:topo:equiv:rate}
Assume that $\theta \in \Theta\cap \mathsf{GL}(n,\mathbb{R})$, $\{\widehat{\theta}_T\}_{T\in \mathbb{N}}$ are the least squares estimators~\eqref{eq:LS:MDP} and $\{a_T\}_{T\in \mathbb{N}}$ is a sequence with $1\ll a_T \ll T$. If $S_{\theta^{\circ}}=S_w^{-1/2}S_\theta S_w^{-1/2}$ and $r=\frac{1}{2}\lambda_{\mathrm{min}}(S_{\theta^{\circ}}-I_n)$, then
\begin{subequations}
\label{eq:main-results}
 \begin{equation}
\label{equ:or:prob}
    \limsup_{T\to \infty}\frac{1}{a_T}\log \mathbb{P}_{\theta}\big( \mathrm{or}(\widehat{\theta}_T) \not = \mathrm{or}(\theta) \big) \leq - r, 
\end{equation}   
\begin{equation}
\label{equ:thm:prob}
    \limsup_{T\to \infty}\frac{1}{a_T}\log \mathbb{P}_{\theta}\big( \mathcal{P}(\widehat{\theta}_T)\not\overset{t}{\sim} \theta    \big) \leq - r. 
\end{equation}
\end{subequations}
\end{theorem}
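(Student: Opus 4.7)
The plan is to combine the quadratic homogeneity of the rate function $I$ along line segments emanating from $\theta$ with the MDP of Proposition~\ref{prop:MDP:closed:loop}. The central observation is that the constant $r$ appearing in the statement equals the $I$-distance from $\theta$ to the closed set $\mathcal{S}=\{A\in\mathbb{R}^{n\times n}:\det(A)=0\}$ of singular matrices, so the opposite-orientation event will be eventually contained in the $T$-independent superlevel set $\{\widehat{\vartheta}_T\in\mathcal{D}\}$ with $\mathcal{D}=\{\theta'\in\Theta':I(\theta',\theta)\geq r\}$, to which the MDP can be applied directly. First I would reduce~\eqref{equ:thm:prob} to~\eqref{equ:or:prob}: because $\mathcal{P}$ preserves orientation (as recalled after~\eqref{equ;I:proj:comp}) and $\mathcal{P}(\widehat{\theta}_T)\in\Theta$ is $\mathbb{P}_\theta$-almost surely invertible, Lemma~\ref{lem:main:topo:equiv} gives $\mathcal{P}(\widehat{\theta}_T)\overset{t}{\sim}\theta$ iff $\mathrm{or}(\mathcal{P}(\widehat{\theta}_T))=\mathrm{or}(\theta)$ iff $\mathrm{or}(\widehat{\theta}_T)=\mathrm{or}(\theta)$, so the two events in~\eqref{eq:main-results} agree up to a $\mathbb{P}_\theta$-null set.

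To establish~\eqref{equ:or:prob} I would first prove the algebraic identity $r=\inf_{\theta'\in\mathcal{S}}I(\theta',\theta)$. For any $\theta'\in\mathcal{S}$ there is a nonzero $v$ with $(\theta'-\theta)v=-\theta v$; after the change of variable $\widetilde\Delta=S_w^{-1/2}(\theta'-\theta)S_\theta^{1/2}$, which recasts $I(\theta',\theta)$ as $\tfrac12\|\widetilde\Delta\|_F^2$, the constrained minimization over matrices satisfying this rank-one equation is a standard least-norm problem with optimal value $\tfrac12\|S_w^{-1/2}\theta v\|^2/\|S_\theta^{-1/2}v\|^2$. Minimizing over $v\neq 0$ gives $\tfrac12\sigma_{\min}(S_w^{-1/2}\theta S_\theta^{1/2})^2$, and invoking the Lyapunov identity $S_\theta-S_w=\theta S_\theta\theta^{\mathsf T}$ from~\eqref{eq:Lyapunov} converts this expression into $\tfrac12\lambda_{\min}(S_{\theta^{\circ}}-I_n)=r$.

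Next I would exploit the quadratic homogeneity of $I$ along line segments. For any $\theta'\in\mathsf{GL}(n,\mathbb{R})$ of opposite orientation from $\theta$, continuity of $\lambda\mapsto\det(\theta+\lambda(\theta'-\theta))$ and the intermediate value theorem produce $\lambda^\star\in(0,1)$ with $\theta+\lambda^\star(\theta'-\theta)\in\mathcal{S}$, and the quadratic form of~\eqref{eq:rate:function:AR:MDP} yields $I(\theta+\lambda^\star(\theta'-\theta),\theta)=(\lambda^\star)^2 I(\theta',\theta)$; since $\lambda^\star\leq 1$ we conclude $I(\theta',\theta)\geq r$. Combined with the rescaling identity $I(\widehat{\theta}_T,\theta)=(a_T/T)\,I(\widehat{\vartheta}_T,\theta)$, immediate from~\eqref{eq:transformed:LSE}, this produces the inclusion
\begin{equation*}
\{\mathrm{or}(\widehat{\theta}_T)\neq\mathrm{or}(\theta)\}\subseteq\{I(\widehat{\vartheta}_T,\theta)\geq rT/a_T\}\subseteq\{\widehat{\vartheta}_T\in\mathcal{D}\}
\end{equation*}
for every $T$ with $T\geq a_T$, where $\mathcal{D}$ is closed by continuity of $I$ and satisfies $\inf_{\theta'\in\mathcal{D}}I(\theta',\theta)=r$.

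Finally I would apply the upper bound~\eqref{eq:ldp_exponential_rates_ub} of Proposition~\ref{prop:MDP:closed:loop} to the fixed Borel set $\mathcal{D}$ to obtain $\limsup_{T\to\infty}(1/a_T)\log\mathbb{P}_\theta(\widehat{\vartheta}_T\in\mathcal{D})\leq-r$; combining this with the set inclusions above proves~\eqref{equ:or:prob}, and hence also~\eqref{equ:thm:prob}. The main obstacle I anticipate is the first step, namely the algebraic identification $\inf_{\theta'\in\mathcal{S}}I(\theta',\theta)=r$: it requires the constrained quadratic minimization together with a careful use of the Lyapunov equation~\eqref{eq:Lyapunov} in order to recognize $\sigma_{\min}(S_w^{-1/2}\theta S_\theta^{1/2})^2$ as $\lambda_{\min}(S_{\theta^{\circ}}-I_n)$. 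The remaining pieces---the homogeneity argument along line segments, the translation to an event on $\widehat{\vartheta}_T$, and the invocation of the MDP---are essentially routine.
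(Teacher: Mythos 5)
Your proposal is correct, and it reaches the stated rate by a genuinely different route than the paper. The paper parametrizes the opposite-orientation event as $\{\widehat{\theta}_T = G\theta,\ G\in\mathsf{GL}^-(n,\mathbb{R})\}$, rewrites it as $\{\widehat{\vartheta}_T\in\mathcal{D}_T(\theta)\}$ for a $T$-dependent set, enlarges to the union $\mathcal{D}(\theta)=\bigcup_T\mathcal{D}_T(\theta)$ so that the MDP applies, and then lower-bounds $\inf_{\theta'\in\mathsf{cl}\,\mathcal{D}(\theta)}I(\theta',\theta)$ via the trace inequality $\mathsf{tr}(AB)\geq\sigma_{\min}(A)\mathsf{tr}(B)$ and the Eckart--Young theorem (first for $S_w=I_n$, then for general $S_w$ by a coordinate change); it only remarks in passing that the resulting bound is tight. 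You instead identify $r$ exactly as the $I$-distance from $\theta$ to the singular matrices $\mathcal{S}$ via a least-norm computation (your reduction to $\sigma_{\min}(S_w^{-1/2}\theta S_\theta^{1/2})^2$ and the Lyapunov identity checks out, since $\sigma_{\min}(M)^2=\lambda_{\min}(MM^{\mathsf{T}})$ and $\theta S_\theta\theta^{\mathsf{T}}=S_\theta-S_w$), then use the intermediate value theorem along the segment from $\theta$ to $\theta'$ together with the quadratic homogeneity of $I$ in $\theta'-\theta$ to conclude $I(\theta',\theta)\geq r$ whenever $\mathrm{or}(\theta')\neq\mathrm{or}(\theta)$, and finally wrap the event in the fixed closed superlevel set $\{I(\cdot,\theta)\geq r\}$ using $T/a_T\geq 1$. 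This buys you three things: the Borel set fed to the MDP is independent of $T$ and of $\{a_T\}$, so no union or closure argument is needed; the general-$S_w$ case is handled directly without a separate coordinate change; and you obtain the exact value $r=\inf_{\theta'\in\mathcal{S}}I(\theta',\theta)$ rather than only a lower bound, which substantiates the tightness claim the paper leaves unproved. The only cosmetic caveat is that on the event where $\widehat{\theta}_T$ is singular the orientation is undefined; like the paper, you should either note that $\widehat{\theta}_T\in\mathsf{GL}(n,\mathbb{R})$ holds $\mathbb{P}_\theta$-almost surely for large $T$, or observe that singular realizations land in $\mathcal{S}$ and hence already satisfy $I(\widehat{\theta}_T,\theta)\geq r$, so your inclusion survives either convention.
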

\begin{proof}
As for~\eqref{equ:or:prob}, note that $\widehat{\theta}_T\in \mathsf{GL}(n,\mathbb{R})$ $\mathbb{P}_{\theta}$-almost surely for all sufficiently large $T$, and therefore we have
\begin{align*}
    \mathbb{P}_{\theta}\big( \mathrm{or}(\widehat{\theta}_T) \not = \mathrm{or}(\theta) \big) & =\mathbb{P}_{\theta}\big( \mathrm{or}(\widehat{\theta}_T) = -\mathrm{or}(\theta) \big)\\
    & =\mathbb{P}_{\theta}\big(\exists G\in \mathsf{GL}^- (n,\mathbb{R}): \widehat{\theta}_T=G\theta\big)\\
    & =\mathbb{P}_{\theta}\big(\widehat{\vartheta}_T\in\mathcal D_T(\theta)\big)
\end{align*}
where the second equality holds because any invertible matrices~$\widehat{\theta}_T$ and~$\theta$ whose determinants have opposite signs satisfy~$\widehat{\theta}_T=G\theta$ for some $G\in \mathsf{GL}^-(n,\mathbb{R})$. The third equality follows from the definition of the transformed least squares estimators $\{\widehat{\vartheta}_T\}_{T\in \mathbb{N}}$ in~\eqref{eq:transformed:LSE} and the construction of the set
\begin{equation*}
    \begin{aligned}
    \mathcal{D}_T(\theta) = \{& \sqrt{(T/a_T)}(G - I_n)\theta+\theta: G\in \mathsf{GL}^-(n,\mathbb{R})\}. 
    \end{aligned}
\end{equation*}
As this set is time-dependent, we cannot directly use it. That is, it is not admissible in the sense of Definition~\ref{def:MDP:general}. To sidestep this complication, we consider instead the larger set
\begin{equation*}
    \mathcal{D}(\theta) = \bigcup_{T\in \mathbb{N}}\mathcal{D}_T(\theta).
\end{equation*}
The above reasoning then implies that
\begin{align*}
    &\limsup_{T\to \infty}\frac{1}{a_T}\log \mathbb{P}_{\theta}\big(\mathrm{or}(\widehat{\theta}_T) \not = \mathrm{or}(\theta) \big)\\
    \leq& \limsup_{T\to \infty}\frac{1}{a_T}\log \mathbb{P}_{\theta}\big(\widehat{\vartheta}_T \in\mathcal D(\theta) \big)\leq - \inf_{\theta'\in \mathsf{cl}\,\mathcal{D}(\theta)}I(\theta',\theta),
\end{align*}
where the first inquality holds because $\mathcal{D}_T(\theta)\subset \mathcal{D}(\theta)$ for all~$T\in\mathbb N$, while the second inequality follows from the MDP established in Proposition~\ref{prop:MDP:closed:loop}. In the remainder of the proof we derive an analytical lower bound on the minimization problem on the right hand side of the above expression. To this end, assume first that $S_w=I_n$. Evaluating the rate function~\eqref{eq:rate:function:AR:MDP} at an arbitrary $\theta'\in \mathcal{D}(\theta)$ then yields \begin{align}
\label{equ:I:eval}
    I(\theta',\theta) = \frac{T}{2 a_T}\mathsf{tr}\left((G-I_n)\theta S_{\theta} \theta^{\mathsf{T}}(G-I_n)^{\mathsf{T}} \right)
\end{align}
for some $G\in \mathsf{GL}^-(n,\mathbb{R})$ and some $T\in \mathbb{N}$, and
the Lyapunov equation~\eqref{eq:Lyapunov} implies that $\theta S_{\theta} \theta^{\mathsf{T}}=S_{\theta}-I_n$. In addition, our assumptions about the sequence $\{a_T\}_{T\in \mathbb{N}}$ imply that~$T/a_T\ge 1$. 
We may thus conclude that
\begin{align*}
    \min_{\theta'\in\mathsf{cl}\,\mathcal{D}(\theta)} & I(\theta',\theta) \\ &\geq
    \inf_{\det(G)\le 0} \tfrac{1}{2} \mathsf{tr}\left((G-I_n) (S_\theta-I_n) 
    (G-I_n)^{\mathsf{T}} \right) \\
    &\geq \tfrac{1}{2}(\lambda_{\mathrm{min}}(S_{\theta})-1) \inf_{\det(G)\le 0}\| G-I_n\|_\mathsf{F}^2 \\
    &=\tfrac{1}{2}(\lambda_{\mathrm{min}}(S_{\theta})-1),
\end{align*}
where the first inequality holds because $\det(G)\le 0$ for every $G\in \mathsf{cl}\,\mathsf{GL}^-(n,\mathbb{R})$, the second inequality uses the bound $\mathsf{tr}(AB) \geq \sigma_\mathrm{min}(A) \mathsf{tr}(B)$ for any $A,B\succeq 0$, and the third inequality follows from the Eckart-Young Theorem~\cite[Theorem~2.4.8]{ref:Gvl}. One can actually show that the second inequality is tight, but this is not needed to prove the theorem. 
This establishes~\eqref{equ:or:prob} for~$S_w=I_n$. If $S_w\succ 0$ is arbitrary, one may first apply a change of coordinates $x^{\circ}=S_w^{-1/2}x$, under which the noise covariance matrix simplifies to~$I_n$, while the system matrix and the invariant state covariance matrix become $\theta^{\circ}=S_w^{-1/2}\theta S_w^{1/2}$ and $S_{\theta^{\circ}}=S_w^{-1/2}S_{\theta}S_w^{-1/2}$, respectively. As $S_w\succ 0$, we further have $\theta^{\circ}\overset{t}{\sim}\theta$. Applying the results of the first part of the proof to the transformed system finally yields~\eqref{equ:or:prob}. As for~\eqref{equ:thm:prob}, recall from Sections~\ref{sec:topo:equiv} and~\ref{sec:moderate:deviations:theory} that $\mathcal{P}(\theta')$ is asymptotically stable and that $\mathrm{or}(\theta')=\mathrm{or}(\mathcal{P}(\theta'))$ for any $\theta'\in \mathsf{GL}(n,\mathbb{R})$. Therefore, we can focus on orientation solely, \textit{i.e.,} if $\mathrm{or}(\theta')=\mathrm{or}(\theta)$ then $\mathcal{P}(\theta')\overset{t}{\sim}\theta$. Then again, as $\mathbb{P}_{\theta}\big(\widehat{\theta}_T\not\in \mathsf{GL}(n,\mathbb{R})\big)=0$ for sufficiently large $T$, the claim follows from~\eqref{equ:or:prob}.  
\end{proof}
The rate $r=\frac{1}{2}\lambda_{\mathrm{min}}(S_{\theta^{\circ}}-I_n)$ established in Theorem~\ref{thm:topo:equiv:rate} is non-trivial (strictly positive) for any $\theta\in \Theta\cap \mathsf{GL}(n,\mathbb{R})$ as
\begin{equation*}
 S_{\theta^{\circ}} = S_w^{-1/2}S_{\theta}S_w^{-1/2} \overset{\eqref{eq:Lyapunov}}{=} S_w^{-1/2}\sum^{\infty}_{k=1}\theta^k S_w (\theta^k)^{\mathsf{T}}S_w^{-1/2} + I_n.
\end{equation*}

\subsection{Implications of Theorem~\ref{thm:topo:equiv:rate}}
{
As the rate~$r$ derived in Theorem~\ref{thm:topo:equiv:rate} is a function of~$\theta$, the properties of the unknown system matrix~$\theta$ determine the likelihood of topological misclassification.
In high-performance applications where some eigenvalues of $\theta$ are close to~$0$, for example, the sign of the determinant of~$\theta$ and therefore the topological class of the underlying system are difficult to estimate.
As such applications are usually safety-critical, however, inferring the correct topological class is of utmost importance. In the context of Figure~\ref{fig:topospringdamper}, designing a controller tailored to a damper might be detrimental if the true system is a spring. Theorem~\ref{thm:topo:equiv:rate} indicates that designing a system for high performance in the sense of constructing some eigenvalues of~$\theta$ to be close to~$0$  
is in conflict with the reliable and fast identification of the system's topological class. 
}

 Quantitative notions of stability and controllability, which strengthen the standard qualitative notions of stability and controllability, respectively, offer further insights into~\eqref{equ:thm:prob}.
 
\begin{definition}[Strong controllability~\cite{ref:Cohen18b}]
\label{def:strong:contr}
A pair $(A,B)$ is called $(\ell,\nu)$-strongly controllable for $\ell\in\mathbb N$ and $\nu>0$ if the matrix $C_{\ell}=\begin{pmatrix}B\,AB\,\cdots\,A^{\ell-1}B\end{pmatrix}$ satisfies $\sigma_{\mathrm{min}}(C_{\ell})\geq \nu$.
\end{definition}

If $A$ and $B$ parametrize the deterministic system $x_{t+1}=Ax_t+Bu_t$ and $(A,B)$ is $(\ell,\nu)$-strongly controllable, 
then, a small $\nu$ indicates that for two points $x_a,x_b$ being close, the inputs to reach them from $x_0$ might be wildly different.
\begin{definition}[Strong stability~\cite{ref:Cohen18b}]
\label{def:strong:stab}
A square matrix $A$ is $(\kappa,\gamma)$-strongly stable for $\kappa>0$ and $\gamma\in (0,1]$ if $A=HLH^{-1}$ for $L$ and $H$ such that $\|L\|_2\leq 1-\gamma$ and $\|H\|_2\|H^{-1}\|_2\leq \kappa$. 
\end{definition}
One readily verifies that any $(\kappa,\gamma)$-strongly stable matrix is asymptotically stable. Conversely, by Lyapunov stability theory, for any asymptotically stable matrix $A$ there exists $P\succ 0$ such that $P-A^{\mathsf{T}}P A \succ 0$. Thus, the transformed matrix $A'=P^{1/2}A P^{-1/2}$ satisfies $I_n-(A')^{\mathsf{T}}A'\succ 0$, which in turn implies that $\|A'\|_2< 1$. Hence, $A=HLH^{-1}$ with $H=P^{-1/2}$ and $L=A'$, and $A$ is $(\kappa,\gamma)$-strongly stable with $\gamma=1-\|A'\|_2$ and $\kappa$ being proportional to the condition number of $P^{1/2}$. The next lemma relates the rate~$r$ of Theorem~\ref{thm:topo:equiv:rate} to strong controllability and stability properties of the pair $(\theta^{\circ},I_n)$.


\begin{lemma}[Bounding the rate of Theorem~\ref{thm:topo:equiv:rate}]
\label{lem:Stheta:bound}
If the pair $(\theta^{\circ},I_n)$ is $(\ell,\nu)$-strongly controllable and $\theta^{\circ}$ is $(\kappa,\gamma)$-strongly stable, then
\begin{equation}
    \label{equ:Stheta:bound}
    \nu^2 \leq \lambda_{\mathrm{min}}(S_{\theta^{\circ}})\leq (\kappa^2)/(2\gamma -\gamma^2). 
\end{equation}
\end{lemma}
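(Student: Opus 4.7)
The plan is to exploit the explicit series representation of the solution to the Lyapunov equation satisfied by $S_{\theta^{\circ}}$. Applying the change of coordinates that took us from $\theta$ to $\theta^{\circ}$ in the proof of Theorem~\ref{thm:topo:equiv:rate} to~\eqref{eq:Lyapunov} shows that $S_{\theta^{\circ}}$ solves the Lyapunov equation $S_{\theta^{\circ}} = \theta^{\circ} S_{\theta^{\circ}} (\theta^{\circ})^{\mathsf{T}} + I_n$, and since $\theta^{\circ}$ inherits asymptotic stability from $\theta$, this equation admits the unique positive definite solution
\begin{equation*}
    S_{\theta^{\circ}} = \sum_{k=0}^{\infty} (\theta^{\circ})^k \bigl((\theta^{\circ})^k\bigr)^{\mathsf{T}}.
\end{equation*}
Both the lower and the upper bound in~\eqref{equ:Stheta:bound} will follow from this identity combined with the two quantitative hypotheses.

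For the lower bound, I would observe that the controllability matrix from Definition~\ref{def:strong:contr} for the pair $(\theta^{\circ},I_n)$ is $C_{\ell} = \bigl(I_n\; \theta^{\circ}\;\cdots\;(\theta^{\circ})^{\ell-1}\bigr)$, so that
\begin{equation*}
C_{\ell} C_{\ell}^{\mathsf{T}} = \sum_{k=0}^{\ell-1}(\theta^{\circ})^k\bigl((\theta^{\circ})^k\bigr)^{\mathsf{T}} \preceq S_{\theta^{\circ}}.
\end{equation*}
Taking minimum eigenvalues on both sides and invoking $\sigma_{\mathrm{min}}(C_{\ell})\geq \nu$ yields $\lambda_{\mathrm{min}}(S_{\theta^{\circ}}) \geq \sigma_{\mathrm{min}}(C_{\ell})^2 \geq \nu^2$.

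For the upper bound, I would use the strong stability decomposition $\theta^{\circ} = H L H^{-1}$ to get $(\theta^{\circ})^k = H L^k H^{-1}$, so that the operator-norm submultiplicativity gives $\|(\theta^{\circ})^k\|_2 \leq \|H\|_2\|H^{-1}\|_2 \|L\|_2^k \leq \kappa (1-\gamma)^k$. Consequently,
\begin{equation*}
    S_{\theta^{\circ}} \preceq \sum_{k=0}^{\infty} \bigl\|(\theta^{\circ})^k\bigr\|_2^{\,2}\, I_n \leq \kappa^2 \sum_{k=0}^{\infty}(1-\gamma)^{2k} I_n = \frac{\kappa^2}{2\gamma-\gamma^2}\,I_n,
\end{equation*}
where the geometric series converges because $\gamma\in(0,1]$. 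Taking the maximum eigenvalue bounds $\lambda_{\mathrm{max}}(S_{\theta^{\circ}})$ by $\kappa^2/(2\gamma-\gamma^2)$, and since $\lambda_{\mathrm{min}}(S_{\theta^{\circ}})\leq \lambda_{\mathrm{max}}(S_{\theta^{\circ}})$, the desired upper bound follows.

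There is no substantial obstacle here; the only point that needs a brief justification is the form of the Lyapunov equation satisfied by $S_{\theta^{\circ}}$ in the transformed coordinates, which is a direct substitution using $\theta^{\circ} = S_w^{-1/2}\theta S_w^{1/2}$ and $S_{\theta^{\circ}} = S_w^{-1/2}S_{\theta}S_w^{-1/2}$ into~\eqref{eq:Lyapunov}. Everything else is a one-line consequence of monotonicity of eigenvalues under the Loewner order and the geometric-series estimate.
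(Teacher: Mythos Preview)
Your proof is correct and follows essentially the same route as the paper: both identify $S_{\theta^{\circ}}$ with the controllability Gramian $\sum_{k\geq 0}(\theta^{\circ})^k((\theta^{\circ})^k)^{\mathsf{T}}$, obtain the lower bound from $C_{\ell}C_{\ell}^{\mathsf{T}}\preceq S_{\theta^{\circ}}$, and obtain the upper bound from the strong-stability decomposition. The only difference is cosmetic: the paper cites \cite[Lemma~3.3]{ref:Cohen18b} for the upper bound and remarks that inspecting that proof yields the sharpened constant $\kappa^2/(2\gamma-\gamma^2)$, whereas you carry out that computation explicitly via the geometric series $\sum_{k\geq 0}(1-\gamma)^{2k}$.
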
 
\begin{proof}
The matrix $C_{\ell}$ introduced in Definition~\ref{def:strong:contr} with $A=\theta^\circ$ and $B=I_n$ satisfies $C_{\ell}C_{\ell}^{\mathsf{T}} = \sum^{\ell-1}_{i=0}(\theta^\circ)^i ((\theta^\circ)^i)^{\mathsf{T}}$. Thus, the controllability Gramian $W=\lim_{\ell\to \infty}C_{\ell}C_{\ell}^{\mathsf{T}}$ coincides with~$S_{\theta^\circ}$ as defined in Theorem~\ref{thm:topo:equiv:rate}. As $(\theta^{\circ},I_n)$ is $(\ell,\nu)$-strongly controllable, we may thus conclude that $\lambda_{\mathrm{min}}(S_{\theta^{\circ}})\geq \lambda_{\mathrm{min}}(C_\ell C_\ell^{\mathsf{T}} )\geq \nu^2$. An upper bound on $\lambda_{\mathrm{min}}(S_{\theta^{\circ}})$ can be 
obtained from the $(\kappa,\gamma)$-strong stability of~$\theta^\circ$, which implies via~\cite[Lemma 3.3]{ref:Cohen18b} that $\mathsf{tr}(S_{\theta^{\circ}})\leq (\kappa^2/\gamma)\mathsf{tr}(I_n)$ and hence $\lambda_{\mathrm{min}}(S_{\theta^{\circ}})\leq (\kappa^2/\gamma)$. Looking at the proof of~\cite[Lemma 3.3]{ref:Cohen18b}, this can be sharpened to~\eqref{equ:Stheta:bound}.
\end{proof}

The bound~\eqref{equ:Stheta:bound} {is in line with the folklore wisdom} in system identification that a slow ($\gamma$ small), yet well-excited ($\nu$ large) system is desirable. Next, we highlight a few other properties of the problem parameters $\theta$ and $S_w$ one might be able to manipulate in order to increase the rate in Theorem~\ref{thm:topo:equiv:rate} {(even though this rate depends on $\theta$ and is thus unobservable)}.

\subsubsection{Tuning $\sigma_{\mathrm{min}}(\theta^{\circ})$ and the noise covariance matrix} 
    Using 
    the series representation of $S_{\theta^{\circ}}$ one can 
    show that
    \begin{equation*}
        S_{\theta^{\circ}}\succeq I_n + \frac{1}{1-\sigma_{\mathrm{min}}(\theta^{\circ})^2}\theta^{\circ} (\theta^{\circ})^{\mathsf{T}},
    \end{equation*}
    see~\cite{ref:CHLeebounds}, where $\sigma_{\mathrm{min}}(\theta^{\circ})\leq \rho(\theta^{\circ})<1$. Since also $S_{\theta^\circ}-I_n\succeq\theta^\circ (\theta^\circ)^\top$ it follows that $        \tfrac{1}{2} \lambda_\mathrm{min}(S_{\theta^\circ}-I_n)
         \geq \tfrac{1}{2}  \sigma_\mathrm{min}(\theta^\circ)^2.$
     Hence, an increase in $\sigma_{\mathrm{min}}(\theta^{\circ})$ improves the rate $r$ from Theorem~\ref{thm:topo:equiv:rate}.
    Next, assume that $\theta$ is diagonalizable, \textit{i.e.}, $\theta=V\Lambda V^{-1}$ for some diagonal matrix~$\Lambda$ and invertible matrix~$V$. As $\sigma_{\mathrm{min}}(\theta^{\circ})\leq \lambda_{\mathrm{min}}(\theta^{\circ})=\lambda_{\mathrm{min}}(\theta)$, the preferred noise covariance matrix for which $\sigma_{\mathrm{min}}(\theta^{\circ})$ matches the bound $\lambda_{\mathrm{min}}(\theta)$ (which is independent of~$S_w$) 
    is given by $S_w=\alpha VV^{\mathsf{T}}$ for any $\alpha>0$. 
    As already pointed out in Example~\ref{ex:sysid:noise:mag} the magnitude of $S_w$ is not important, its principal axes are.
    
    \subsubsection{Tuning $\sigma_{\mathrm{min}}(\theta)$} The rate $\mathcal{O}(-\sigma_{\mathrm{min}}(\theta)^2 \sqrt{T})$ from the introduction follows as $\sigma_{\mathrm{min}}(\theta)\leq \sigma_{\mathrm{min}}(\theta^{\circ}) \sigma_{\mathrm{max}}(S_w^{1/2})/\sigma_{\mathrm{min}}(S_w^{1/2})$. In a system-theoretic context, as $\sigma_{\mathrm{min}}(\theta)=\inf_{\|x\|_2=1}\|\theta x\|_2$, an increase in $\sigma_{\mathrm{min}}(\theta)$ reduces the contraction rate of~\eqref{eq:LTI:system}. 
    
    \subsubsection{Minimizing interconnections}\label{interconnections} Consider a separable and an interconnected system with
    \begin{equation*}
        \theta_1 = \begin{pmatrix} Y & 0 \\ 0 & Y \end{pmatrix}\quad \text{and} \quad \theta_2 = \begin{pmatrix} Y & I_n \\ 0 & Y \end{pmatrix}
    \end{equation*}
    for some $Y\in \Theta$, respectively. 
    One can show that $\sigma_{\mathrm{min}}(\theta_1)\geq \sigma_{\mathrm{min}}(\theta_2)$, which aligns with intuition: if possible, identify subsystems individually.  

\section{Numerical Example}
\label{sec:num}
We now compare the theoretical decay rate of topological misclassification derived in Theorem~\ref{thm:topo:equiv:rate} against the empirical decay rate for the nominal least squares estimator~$\widehat{\theta}_T$ and its reverse $I$-projection $\mathcal{P}(\widehat{\theta}_T)$. Concurrently, we exemplify the insights from Section~\ref{interconnections}. 
To this end, we set
\begin{equation*}
    Y = \begin{pmatrix} -0.1 & 1\\ 0.1 & 0.05\end{pmatrix}, 
\end{equation*}
and simulate system~\eqref{eq:LTI:system} for both $\theta_1$ and $\theta_2$ defined as in Section~\ref{interconnections} starting from $E=10^3$ initial conditions $x_0\overset{i.i.d.}{\sim}\mathcal{N}(0,I_4)$ with $S_w=I_4$. Each initial condition leads to a trajectory under both $\theta_1$ and $\theta_2$ from which we construct the corresponding least squares estimators $\widehat{\theta}^{(i)}_{j,T}$, $i=1,\dots,E$, $T=1,\dots,10^3$, $j\in \{1,2\}$. Averaging over the $E$ simulation runs yields the empirical probability that~$\widehat{\theta}_{j,T}$ or its reverse $I$-projection are topologically equivalent to the true system matrix $\theta_j$. 
\begin{figure}
    \centering
    \includegraphics[scale=0.2]{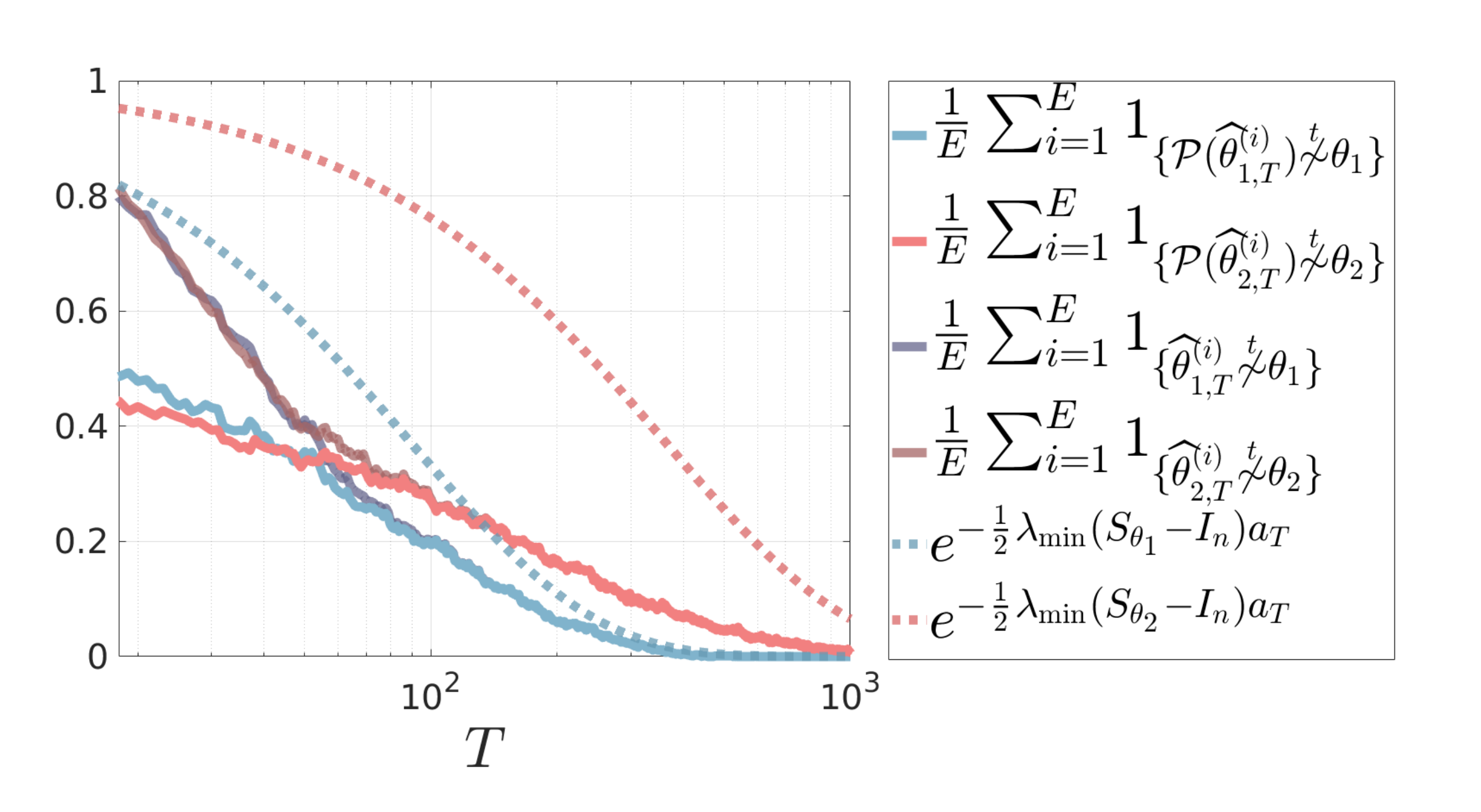}
    \caption{Empirical versus theoretical convergence rates.}
    \label{fig:num}
\end{figure}
Figure~\ref{fig:num} compares the bounds on the misclassification probability derived in Theorem~\ref{thm:topo:equiv:rate} for $a_T= T^{\frac{1}{1+\epsilon}}$ with $\epsilon=10^{-9}$ against the empirical probabilities. Here, $\mathcal{P}(\widehat{\theta}_{j,T}^{(i)})$ is computed via~\eqref{equ:rev;I:proj:comp} for $\delta=10^{-9}$. As expected, the projection accelerates topological identification, and block-diagonal system matrices are easier to identify. 

\printbibliography[title={Bibliography}]

\end{document}